\newcolumntype{2}{D{.}{}{2.0}}
  \def\<{{\langle}} 
  \def\>{{\rangle}}
  \def\note#1{{}}
  \def\note#1{} 
  \def\beq{\begin{equation}} 
  \def\eeq{\end{equation}}
  \newcounter{zlist}
  \newcounter{blist}
  \newcounter{rlist}
\def\stac#1{\raise-.2cm\hbox{$\stackrel{\displaystyle\otimes}{\scriptscriptstyle{#1}}$}}
\def\cten#1{\raise-.2cm\hbox{$\stackrel{\displaystyle\widehat{\otimes}}
{\scriptscriptstyle{#1}}$}}
  \def\Label#1{\label{#1}\ifmmode\llap{[#1] }\else 
  \marginpar{\smash{\hbox{\tiny [#1]}}}\fi} 
  \def\Label{\label}
  \newtheorem{proposition}{Proposition}[section]
  \newtheorem{lemma}[proposition]{Lemma} 
  \newtheorem{corollary}[proposition]{Corollary} 
  \newtheorem{theorem}[proposition]{Theorem} 
\theoremstyle{definition}
  \theoremstyle{remark}
   \numberwithin{equation}{section}
\newcommand{\gG}{\mathrm{G}}
\newcommand{\hH}{\mathrm{H}}
\newcommand{\uU}{\mathrm{U}}
\newcommand{\Cc}{\mathcal{C}}
\newcommand{\Gg}{\mathcal{G}}
\newcommand{\Hh}{\mathcal{H}}
\def\*C{{}^*\hspace*{-1pt}{\Cc}}
\def\text#1{{\rm {\rm #1}}}
\def\Set{\mathbf{Set}}
 \def\1{\mathbf{1}}
\def\hrd {\mathbf{Heap}}
\def\ahrd{\mathbf{Ah}}
\def\grp {\mathbf{Grp}}
\def\lto{\longmapsto}
\def\lra{\longrightarrow}
\def\1\mathbf{1}
\def\|#1{\overline{#1}}
\newcommand\reallywidehat[1]{%
\savestack{\tmpbox}{\stretchto{%
  \scaleto{%
    \scalerel*[\widthof{\ensuremath{#1}}]{\kern.1pt\mathchar"0362\kern.1pt}%
    {\rule{0ex}{\textheight}}
  }{\textheight}%
}{2.4ex}}%
\stackon[-6.9pt]{#1}{\tmpbox}%
}
\begin{document}

\title[A note on a free group]{A note on a free group. The decomposition of a free group functor through the category of heaps}

\author{Bernard  Rybo{\l}owicz}

\address{
Department of Mathematics, Swansea University, 
Swansea University Bay Campus,
Fabian Way,
Swansea,
  Swansea SA1 8EN, U.K.}

\urladdr{https://sites.google.com/view/bernardrybolowicz/}
\email{Bernard.Rybolowicz@swansea.ac.uk}

\subjclass[2010]{97H40, 08A62; 08B20}

\keywords{heaps, free groups, free group functor}

\begin{abstract}
This note aims to introduce a left adjoint functor to the functor which assigns a heap to a group. The adjunction is monadic. It is explained how one can decompose a free group functor through the previously introduced adjoint and employ it to describe a slightly different construction of free groups.
\end{abstract}    
\date\today
\maketitle

\section{Introduction}

In the construction of a free group (see \cite{von} and \cite[Chapter 6]{DF}), one extends a generating set by a neutral element and inverse elements. The procedure of adding those elements happen entirely by using the set-theoretic operations, i.e. a disjoint union of sets. Therefore it is hard to grasp the meaning of those operations in an algebraic sense.
Thus the natural question is if there exists some approach/point of view from which one could spot the algebraic interpretation of the set-theoretic process of extending the generating set.

 The approach presented in the paper uses heaps, a specific variant of universal algebras introduced by R.\ Baer  \cite{Bae:ein} and H.\ Pr\"ufer \cite{Pru:the}. Due to both Pr\"ufer \cite{Pru:the} (Abelian case) and R.\ Baer  \cite{Bae:ein} (general case), it is known that with every heap one can associate a group and that to every group one can assign a heap. The latter is a functor.

This paper aims to construct a left adjoint to the functor, which assigns a heap to a group, see Theorem~\ref{thm1}. The second goal is to point out what consequences it yields for free groups, see Section~\ref{sect4}. The crucial observations of the main part are Corollary~\ref{cor3} and Corollary~\ref{cor4}, which explains that the free group functor decomposes through the category of heaps.

In the conclusions, we explain that even though in general, the description of a coproduct, which is crucial to define the left adjoint, is not an easy task, in this specific case the structure is transported by the free functor of heaps and therefore is easily described. The last words of the paper briefly explain how the inverses and a neutral element arise through the employment of the free group functor.

 \section{Preliminaries}
Following Baer \cite{Bae:ein} and Pr\"ufer \cite{Pru:the} a {\em heap} is a set $H$ together with a ternary operation $[-,-,-]:H\times H \times H\to H$ such that for all $h_1,h_2,h_3,h_4,h_5\in H$ the following holds
\begin{equation}\label{rule:1}
[[h_1,h_2,h_3],h_4,h_5]=[h_1,h_2,[h_3,h_4,h_5]],
\end{equation}
\begin{equation}\label{malcev}
[h_1,h_2,h_2]=h_1=[h_2,h_2,h_1].
\end{equation}
First equation is called an associativity and the second is Mal'cev identities.

If for all $h_1,h_2,h_3\in H$, $[h_1,h_2,h_3]=[h_3,h_2,h_1]$ we say that $H$ is Abelian.

A {\em sub-heap} $S$ of a heap $H$ is a subset of $H$ closed under the ternary operation.

A {\em homomorphism of heaps }is a map between heaps which preserves ternary operation. Observe that a constant map between two heaps is a heap homomorphism since, by Mal'cev identities \eqref{malcev}, a single element $e\in H$ is a sub-heap of $H$.

A sub-heap $S$ is said to be {\em normal} if there exists $e\in S$ such that for all $h \in H$ and $s \in S$ there exists $s' \in S$ such that
$$
[h, e, s] = [s', e, h] \textrm{ or equivalently } [[h,e,s],h,e]=s'
$$
If $S$ is a normal sub-heap then the quotient heap $H/S$ is well defined and canonical map $\pi:H\to H/S$ is a heap epimorphism, see \cite[Proposition 2.10]{Brz:par}.

An important property of heaps is that with every heap $H$ we can associate a group by choosing an element $e\in H$ and defining a binary operation $+_e:=[-,e,-]:H\times H\to H$, we will call the group $(H,+_e)$ {\em a retract of $H$ in $e$} or $e$-{\it retract} and denote by $\gG(H;e)$. It is worth to mention that an assignment $\gG$ is not a functor, as it is not well-defined on the morphisms. If $\varphi$ is a homomorphism of heaps then $\varphi$ is a homomorphism of appropriate retracts if and only if it preserves neutral elements of the retracts.

In the opposite direction, one can associate with every group $G$ a heap by defining ternary operation, for all $a,b,c\in G$, as $[a,b,c]:=ab^{-1}c$. We call this heap a {\em heap associated with a group G} and denote it by $\hH(G)$. In contrast to the assignment $\gG$, the assignment $\hH:\grp\to \hrd$, between categories of groups and heaps, is a functor given on morphisms $\varphi:G\to G'$ by $\hH(\varphi)=\varphi$. Every group homomorphism is a homomorphism of associated heaps. 

By employing both assignments to a group $G$ one gets that a group  $\gG(\hH(G);e)$ is isomorphic to $G$, for all $e\in G$. By applying assignments to a heap $H$, we get that $\hH(\gG(H;e))=H$, for all $e\in H$. The second link between groups and heaps can be used to show that for all $h_1,h_2,h_3,h_4,h_5\in H,$
\begin{equation}\label{rule:2}
[[h_1,h_2,h_3],h_4,h_5]=[h_1,[h_4,h_3,h_2],h_5]=[h_1,h_2,[h_3,h_4,h_5]],
\end{equation}
see Lemma~2.3 of \cite{Brz:par}.

In Section 3 of \cite{BrzRyb:mod}, one can find the construction of a free heap over a set $X$, $\Hh(X)$. Since heaps form a variety of algebras a free functor exists, we will denote it by $\Hh:\Set\to\hrd$ to be coherent with the notation of free heaps. It is a left adjoint to the forgetful functor $\uU_{\hrd}:\hrd \to\Set$.  In contrast to the case of a free group, in a free heap  one does not extend the set of letters by any new letters such as a neutral element or inverse letters.
	
	Since heaps form a variety of algebras small colimits in $\hrd$ exist (see \cite[Theorem 9.4.14]{Ber:inv}), particularly the one in the centre of our attention will be a coproduct.

A {\em coproduct} of two objects $A$ and $B$ in a category $\mathfrak{C}$ is an object $C$ with two morphisms $\iota_A:A\to C$ and $\iota_B:B\to C$, called canonical injections, such that for any object $D$ and morphisms $f:A\to D$ and $g:B\to D$, there exists a unique morphism $\varphi:C\to D$ such that $\varphi\circ \iota_A=f$ and $\varphi\circ \iota_B=g$. In the diagram-like style, the diagram

\begin{equation}\label{sum.diag}
\xymatrix{&& D && \cr A \ar[rr]^{\iota_A}\ar[urr]^f & &C \ar@{-->}[u]_{\varphi} & & B\ar[ll]_{\iota_B}\ar[ull]_g }
\end{equation}
commutes.

In Section 3 of \cite{BrzRyb:mod} one can find the construction of a coproduct in the {\em category of Abelian heaps} $\ahrd$, a full subcategory of $\hrd$. The idea is to take two Abelian heaps $A$ and $B$, consider the free heap over theirs disjoint union $\Hh(A\sqcup B)$ and then divide it by the sub-heap generated by
\begin{equation}\label{eq:1}
[[a,a',a''],[a,a',a'']_{A},e],\quad [[b,b',b''],[b,b',b'']_{B},e],
\end{equation}
for all $a,a',a''\in A$, $b,b',b''\in B$, where
$[---],[---]_{A},[---]_{B}$ are ternary operations in $\Hh(A\sqcup B)$, $A$ and $B$, respectively.

\section{Main part}
Let us fix some notation. Following \cite{BrzRyb:mod} we will denote the coproduct of two not necessarily Abelian heaps $H$ and $S$ by $H\boxplus S$. It exists by \cite[Theorem 9.4.14]{Ber:inv}, though we do not know exact construction of it for non-Abelian heaps, for Abelian case see \cite[Section 3]{BrzRyb:mod}. The unique filler of the coproduct diagram for morphisms $f$ and $g$, will be called a {\em coproduct map} and will be denoted by $f\boxplus g$.

Our first and main goal is to construct a left adjoint functor to the functor $\hH:\grp\to \hrd$.

A singleton heap is a heap that has only one element, we will denote it by $\{*\}$. For any heap $H$, one can consider a group $\mathrm{Gr}_{*}(H):=\gG(H\boxplus\{*\};*)$. The following lemma shows that this group has a very interesting universal property, which will be essential in the construction of the adjoint.

\begin{lemma}\label{heap:lem:univ}
Let $H$ be a heap, S be a group and $f: H\to \hH(S)$ be a heap homomorphism. Then there exists a unique group homomorphism $\mathrm{Gr}_{*}(f):\mathrm{Gr}_{*}(H)\to S$ such that $f=\hH(\mathrm{Gr}_{*}(f))\circ \iota_{H}$, where $\iota_H$ is a canonical injection into coproduct. In other words, diagram 
\begin{equation}\label{graf1}
\xymatrix {H\ar[rr]^-{\iota_{H}} \ar[rrdd]_-f&& \hH(\mathrm{Gr}_{*}(H))  \ar@{-->}[dd]^-{\exists !\, \hH(\mathrm{Gr}_{*}(f))}
\\
\\
&& \hH(S) &}
\end{equation}
commutes, where $\exists !\, \hH(\mathrm{Gr}_{*}(f))$ reads ``\it{there exists exactly one \textbf{homomorphism of groups} $\mathrm{Gr}_{*}(f)$}". The pair $(\mathrm{Gr}_{*}(H),\iota_H)$ is a universal arrow, see \cite[Section III.1]{Mac:lane}.
\end{lemma}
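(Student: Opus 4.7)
The plan is to invoke the universal property of the coproduct $H\boxplus\{*\}$ in $\hrd$, combined with the correspondence between group homomorphisms and basepoint-preserving heap homomorphisms between retracts. Given $f:H\to\hH(S)$, I would first produce a companion heap homomorphism $g:\{*\}\to\hH(S)$ defined by $g(*)=e_S$, the identity of $S$; this is a heap homomorphism by the observation in the Preliminaries that every constant map between heaps is one. The universal property of the coproduct then furnishes a unique heap homomorphism $f\boxplus g:H\boxplus\{*\}\to\hH(S)$ with $(f\boxplus g)\circ\iota_H=f$ and $(f\boxplus g)\circ\iota_{\{*\}}=g$.

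Next, I would define $\mathrm{Gr}_{*}(f)$ to be the same underlying set map as $f\boxplus g$, now viewed as a map of groups $\gG(H\boxplus\{*\};*)\to\gG(\hH(S);e_S)=S$. The crucial point is that $f\boxplus g$ is a heap homomorphism sending the chosen basepoint $*$ to $e_S$, so by the remark in the Preliminaries it intertwines the retract operations $[-,*,-]$ and $[-,e_S,-]$, and hence is genuinely a group homomorphism between the retracts. Using the identifications $\hH(\gG(H\boxplus\{*\};*))=H\boxplus\{*\}$ and $\gG(\hH(S);e_S)=S$, together with the fact that $\hH$ is the identity on morphisms, the relation $\hH(\mathrm{Gr}_{*}(f))\circ\iota_H=f$ is then immediate from the defining equation $(f\boxplus g)\circ\iota_H=f$.

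For uniqueness, I would suppose that $\psi:\mathrm{Gr}_{*}(H)\to S$ is any group homomorphism satisfying $\hH(\psi)\circ\iota_H=f$. Since group homomorphisms preserve identities, $\psi(*)=e_S$, and consequently $\hH(\psi):H\boxplus\{*\}\to\hH(S)$ is a heap homomorphism satisfying both $\hH(\psi)\circ\iota_H=f$ and $\hH(\psi)\circ\iota_{\{*\}}=g$. The uniqueness clause of the coproduct then forces $\hH(\psi)=f\boxplus g$, whence $\psi=\mathrm{Gr}_{*}(f)$ as set maps, and therefore as group homomorphisms. The only conceptually delicate step is the compatibility between basepoint preservation for a heap homomorphism and the homomorphism condition for the associated retract groups; aside from that, the argument is pure bookkeeping of the coproduct universal property and the heap-group correspondence.
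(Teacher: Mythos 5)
Your proof is correct and takes essentially the same route as the paper's: you pair $f$ with the constant heap homomorphism $g(*)=e_S$, invoke the universal property of the coproduct $H\boxplus\{*\}$, observe that sending the basepoint $*$ to $e_S$ makes the coproduct map a homomorphism of the retract groups, and deduce uniqueness from the fact that any candidate group homomorphism must preserve identities and is therefore forced by the coproduct's uniqueness clause. If anything, your uniqueness step is spelled out more explicitly than in the paper's rather terse version of the same argument.
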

\begin{proof}
Observe that by the universal property of coproduct for all groups $S$ and homomorphisms of heaps $f:H\to \hH(S)$, $g:\{*\}\to \hH(S)$ diagram
\begin{equation}
\xymatrix{&& \hH(S)&& \\ H \ar[rr]^{\iota_H}\ar[urr]^{f} & &\hH(\mathrm{Gr}_{*}(H)) \ar@{-->}[u]_{\hH(\mathrm{Gr}_{*}(f))} & & \{*\}\ar[ll]_{\iota_*}\ar[ull]_{g} }
\end{equation}
commutes.  Every homomorphism of groups is a homomorphism of associated heaps. Moreover, a homomorphism of heaps is a homomorphism of retracts if, and only if it maps a neutral element to a neutral element. Hence, $\hH(\mathrm{Gr}_{*}(f))$ is a homomorphism of retracts if and only if $g(\iota_*(*))$ is a neutral element of $S$. Observe that $g$ is unique, since $\{*\}$ is a singleton heap. Therefore $\hH(\mathrm{Gr}_{*}(f))$ is a unique homomorphism of heaps such that it is also a homomorphism of groups to which heaps were associated. Thus, the preceding diagram commutes.
\end{proof}

Another important observation is that a canonical injection $\iota_H$ has some sort of cancellation property.

\begin{lemma}\label{heap:lem:stepi}
Let $H,L$ be heaps and $f,g:\hH(\mathrm{Gr}_{*}(H))\to L$ be homomorphisms of heaps such that $f(\iota_*(*))=g(\iota_*(*))$, then $f\circ \iota_H=g\circ \iota_H$ implies  $f=g$.
\end{lemma}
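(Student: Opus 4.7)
The plan is to reduce the claim to the universal property of the coproduct $H \boxplus \{*\}$. Recall from the preliminaries that for any heap $K$ and any element $e \in K$ we have $\hH(\gG(K;e)) = K$ as heaps. Applied with $K = H \boxplus \{*\}$ and $e = *$, this gives
\[
\hH(\mathrm{Gr}_{*}(H)) \;=\; \hH(\gG(H \boxplus \{*\}; *)) \;=\; H \boxplus \{*\}
\]
as heaps. So $f$ and $g$ are really heap homomorphisms $H \boxplus \{*\} \to L$.

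Now I would invoke the uniqueness clause in the definition of the coproduct (diagram \eqref{sum.diag}): a heap homomorphism out of $H \boxplus \{*\}$ is completely determined by its precompositions with the two canonical injections $\iota_H \colon H \to H \boxplus \{*\}$ and $\iota_* \colon \{*\} \to H \boxplus \{*\}$. The hypothesis $f \circ \iota_H = g \circ \iota_H$ handles the first leg. For the second leg, note that $\{*\}$ has a single element, so the map $\iota_* \colon \{*\} \to H \boxplus \{*\}$ has image $\{\iota_*(*)\}$; hence $f \circ \iota_* = g \circ \iota_*$ is equivalent to the scalar equality $f(\iota_*(*)) = g(\iota_*(*))$, which is also given. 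Therefore both pairs of precompositions agree, and the uniqueness part of the universal property of the coproduct forces $f = g$.

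There is no real obstacle here: the whole argument is formal, built on two ingredients already in the paper, namely the identity $\hH \circ \gG = \id$ on heaps and the defining universal property of the coproduct $\boxplus$. The only point that deserves a sentence of care is the translation of the pointwise condition $f(\iota_*(*)) = g(\iota_*(*))$ into the categorical condition $f \circ \iota_* = g \circ \iota_*$, which is immediate because the source $\{*\}$ is a singleton heap.
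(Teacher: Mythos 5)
Your proof is correct and follows essentially the same route as the paper: both arguments identify $\hH(\mathrm{Gr}_{*}(H))$ with the coproduct $H\boxplus\{*\}$ and then apply the uniqueness clause of the coproduct's universal property, the paper phrasing this as $f=(f\circ\iota_H)\boxplus(f\circ\iota_*)$. Your extra remarks --- spelling out the identity $\hH(\gG(K;e))=K$ and the translation of the pointwise condition into $f\circ\iota_*=g\circ\iota_*$ --- are just the details the paper leaves implicit.
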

\begin{proof}
Let us consider a homomorphism of heaps $f:\hH(\mathrm{Gr}_{*}(H))\to L$. One can easily observe that by the uniqueness of a coproduct map $f=(f\circ\iota_H)\boxplus (f\circ\iota_{*})$. Thus, because $f(\iota_*(*))=g(\iota_{*}(*))$ and $f\circ \iota_H=g\circ \iota_H$, we get that
$$
f=(f\circ\iota_H)\boxplus (f\circ\iota_{*})=(g\circ\iota_H)\boxplus (g\circ\iota_{*})=g.
$$
Therefore, $f(\iota_*(*))=g(\iota_*(*))$ and $(f\circ \iota_H)=(g\circ \iota_H)$ implies  $f=g$.
\end{proof}
\begin{corollary}\label{heap:cor:stepi}
Let $e\in L$. If $f,g:\mathrm{Gr}_{*}(H)\to \gG(L,+_e)$, are homomorphisms of groups, then $f\circ\iota_H=g\circ\iota_H$ implies $f=g$.
\end{corollary}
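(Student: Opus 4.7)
The plan is to reduce the corollary to Lemma~\ref{heap:lem:stepi} by passing from group homomorphisms to their underlying heap homomorphisms via the functor $\hH$. Concretely, given group homomorphisms $f,g:\mathrm{Gr}_{*}(H)\to \gG(L,+_e)$, I would first apply $\hH$ to obtain heap homomorphisms
\[
\hH(f),\hH(g):\hH(\mathrm{Gr}_{*}(H))\longrightarrow \hH(\gG(L,+_e))=L,
\]
using the fact recorded in the Preliminaries that $\hH\circ\gG(-;e)$ is the identity on heaps, so the codomain is literally $L$ as a heap.

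Next I would verify the two hypotheses required by Lemma~\ref{heap:lem:stepi}. The condition $f\circ\iota_H=g\circ\iota_H$ immediately gives $\hH(f)\circ\iota_H=\hH(g)\circ\iota_H$ since $\hH$ acts as the identity on underlying maps. For the other hypothesis, note that $\iota_{*}(*)$ is the neutral element of the group $\mathrm{Gr}_{*}(H)=\gG(H\boxplus\{*\};*)$ by construction, so any group homomorphism into $\gG(L,+_e)$ must send $\iota_{*}(*)$ to $e$; thus
\[
\hH(f)(\iota_{*}(*))=e=\hH(g)(\iota_{*}(*)).
\]

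Applying Lemma~\ref{heap:lem:stepi} to $\hH(f)$ and $\hH(g)$ therefore yields $\hH(f)=\hH(g)$. Since $\hH$ is the identity on morphisms (it only changes the underlying algebraic structure via which one views the map), this conclusion is precisely $f=g$, finishing the proof. The only subtle point, and the one I would spell out most carefully, is the compatibility of neutral elements under $f$ and $g$: it is what allows Lemma~\ref{heap:lem:stepi} to be invoked, and it is where the hypothesis that $f,g$ are group homomorphisms (rather than only heap homomorphisms) is actually used.
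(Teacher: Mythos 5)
Your proposal is correct and follows essentially the same route as the paper: the paper's proof is a one-line reduction to Lemma~\ref{heap:lem:stepi}, using that $\hH(f)$ and $f$ coincide as functions. You have merely made explicit the point the paper leaves implicit, namely that group homomorphisms into $\gG(L,+_e)$ both send the neutral element $\iota_*(*)$ to $e$, which supplies the hypothesis of the lemma.
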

\begin{proof}
This follows by Lemma \ref{heap:lem:stepi} since a homomorphism of heaps $\hH(f)$ is equal to a homomorphism of groups $f$ as functions.
\end{proof}

The following lemma follows by Theorem 2 (ii) in \cite[Section IV]{Mac:lane}, but for the sake of the unaccustomed reader, we sketch a proof.

Now, we are ready to describe the functor. Let us consider an assignment \linebreak$\mathrm{Gr}:\hrd\to \grp$ given on a heap $H$ by $H\mapsto \mathrm{Gr}_{*}(H)$. One can easily see that it is a well-defined function. The assignment is given for all homomorphisms of heaps $f:H\to H'$ by $f\mapsto \mathrm{Gr}_{*}(\iota_{H'}\circ f)$. The assignment on morphisms is well-defined since $\iota_H'\circ f$ is a composition of homomorphisms of heaps, so it is a homomorphism of heaps. Therefore by the universal property of $\mathrm{Gr}_{*}$, $\mathrm{Gr}_{*}(\iota_{H'}\circ f)$ is a homomorphism of groups.

\begin{lemma}
The assignment $\mathrm{Gr}:\hrd\to \grp$ is a functor.
\end{lemma}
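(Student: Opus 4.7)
The plan is to verify the two functor axioms: preservation of identity morphisms and preservation of composition. The main tool in both cases will be the uniqueness statement from Corollary~\ref{heap:cor:stepi}, which says that any two group homomorphisms out of $\mathrm{Gr}_{*}(H)$ that agree after precomposition with $\iota_H$ must be equal. Since the definition $\mathrm{Gr}(f)=\mathrm{Gr}_{*}(\iota_{H'}\circ f)$ already characterises $\mathrm{Gr}(f)$ as the unique group homomorphism making the triangle of Lemma~\ref{heap:lem:univ} commute, verification of both axioms should reduce to a straightforward check after composing with $\iota_H$.

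For identities, I would observe that $\mathrm{Gr}(\mathrm{id}_H)=\mathrm{Gr}_{*}(\iota_H\circ\mathrm{id}_H)=\mathrm{Gr}_{*}(\iota_H)$ is, by Lemma~\ref{heap:lem:univ} applied to the heap homomorphism $\iota_H\colon H\to \hH(\mathrm{Gr}_{*}(H))$, the unique group homomorphism $\mathrm{Gr}_{*}(H)\to\mathrm{Gr}_{*}(H)$ whose image under $\hH$ satisfies $\hH(\mathrm{Gr}_{*}(\iota_H))\circ\iota_H=\iota_H$. The identity morphism $\mathrm{id}_{\mathrm{Gr}_{*}(H)}$ obviously satisfies this same equation, so by Corollary~\ref{heap:cor:stepi} (applied after noting that every group is of the form $\gG(L,+_e)$ for $L=\hH(G)$) the two coincide.

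For composition, given heap homomorphisms $f\colon H\to H'$ and $g\colon H'\to H''$, I need to show that
\[
\mathrm{Gr}_{*}(\iota_{H''}\circ g\circ f)=\mathrm{Gr}_{*}(\iota_{H''}\circ g)\circ \mathrm{Gr}_{*}(\iota_{H'}\circ f)
\]
as group homomorphisms $\mathrm{Gr}_{*}(H)\to\mathrm{Gr}_{*}(H'')$. By Corollary~\ref{heap:cor:stepi} it suffices to check equality after precomposition with $\iota_H$. The left hand side becomes $\iota_{H''}\circ g\circ f$ directly from the defining property of $\mathrm{Gr}_{*}$. For the right hand side, I first apply $\hH$ (which preserves composition and acts as the identity on underlying functions) and use the defining property of $\mathrm{Gr}_{*}(\iota_{H'}\circ f)$ to rewrite $\hH(\mathrm{Gr}_{*}(\iota_{H'}\circ f))\circ\iota_H=\iota_{H'}\circ f$, and then use the defining property of $\mathrm{Gr}_{*}(\iota_{H''}\circ g)$ on the resulting composite $\hH(\mathrm{Gr}_{*}(\iota_{H''}\circ g))\circ \iota_{H'}=\iota_{H''}\circ g$; chaining these yields $\iota_{H''}\circ g\circ f$ as required.

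I do not anticipate a genuine obstacle: the argument is purely formal, relying only on the universal characterisation of $\mathrm{Gr}_{*}$ and the corresponding uniqueness. The only minor subtlety to be careful about is that Corollary~\ref{heap:cor:stepi} is stated for codomains of the form $\gG(L,+_e)$, so I will briefly note that any target group $\mathrm{Gr}_{*}(H'')$ can be written this way (taking $L=\hH(\mathrm{Gr}_{*}(H''))$ with $e$ the neutral element), so that the corollary applies in the generality needed.
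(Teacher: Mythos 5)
Your proposal is correct and follows essentially the same route as the paper: both arguments reduce the functor axioms to an equality after precomposition with $\iota_H$, using the universal property of $\mathrm{Gr}_{*}$ (Lemma~\ref{heap:lem:univ}) to evaluate the composites and Corollary~\ref{heap:cor:stepi} to conclude. Your treatment of the identity axiom is in fact slightly more careful than the paper's one-line ``obviously,'' and your remark about writing the target group as a retract to apply the corollary is a reasonable (if minor) point of added rigour.
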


\begin{proof}
In the previous discussion, we explained that both assignments are well-defined functions. Thus, we have to show that functor preserves identity and composition.

Obviously $\mathrm{Gr}_{*}(\iota_{H}\circ id_H)=id_{\mathrm{Gr}_{*}(H)}$.

For the composition let us assume that $f:H\to H'$ is a homomorphism of heaps, then $\iota_{H'}\circ f$ is a composition of homomorphisms of heaps, hence $\iota_{H'}\circ f:H\to\hH(\mathrm{Gr}(H')))$ is a homomorphism of heaps. If $f:H\to H'$ and $g:H'\to H''$ are homomorphisms of heaps, then 
$$
\begin{aligned}
\mathrm{Gr}(g\circ f)\circ \iota_{H}&=\mathrm{Gr}_{*}(\iota_{H''}\circ g\circ f)\circ \iota_{H}=\iota_{H''}\circ g\circ f=\mathrm{Gr}(g)\circ \iota_{H'}\circ f\\ &=\mathrm{Gr}(g)\circ \mathrm{Gr}(f)\circ \iota_{H},
\end{aligned}
$$
where all the equalities follow by Lemma \ref{heap:lem:univ} applied multiple times. Now, since $\mathrm{Gr}(g\circ f)\circ \iota_{H}=\mathrm{Gr}(g)\circ \mathrm{Gr}(f)\circ \iota_{H}$ and $\mathrm{Gr}(g\circ f),\mathrm{Gr}(g), \mathrm{Gr}(f)$ are homomorphisms of groups, applying Corollary \ref{heap:cor:stepi}, one gets that  $\mathrm{Gr}(g\circ f)=\mathrm{Gr}(g)\circ \mathrm{Gr}(f)$. Therefore an assignment $\mathrm{Gr}$ preserves composition, hence $\mathrm{Gr}$ is a functor.
\end{proof}

The following theorem confirms that $\mathrm{Gr}$ is a desirable functor and follows by the Theorem 2 (i) in \cite[Section IV]{Mac:lane}, but for the sake of the unaccustomed reader, we sketch a proof.

\begin{theorem}\label{thm1}
A functor $\mathrm{Gr}$ is a left adjoint to the functor $\hH$.
\end{theorem}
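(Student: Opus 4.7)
The plan is to invoke the universal-arrow characterization of adjunctions (Mac Lane IV.1, Theorem~2). Lemma~\ref{heap:lem:univ} already establishes exactly what is needed: for each heap $H$, the pair $(\mathrm{Gr}_{*}(H),\iota_{H})$ is a universal arrow from $H$ to the functor $\hH$. Once this is recognized, the adjunction $\mathrm{Gr}\dashv\hH$ is an immediate formal consequence, with $\iota$ serving as the unit. I would however make the bijection explicit and verify naturality, both for completeness and because the reader is warned to be ``unaccustomed.''

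First I would define, for each heap $H$ and group $S$, the maps
\[
\Phi_{H,S}\colon \lhom{\grp}{\mathrm{Gr}(H)}{S}\to \lhom{\hrd}{H}{\hH(S)},\qquad \phi\mapsto \hH(\phi)\circ \iota_{H},
\]
and $\Psi_{H,S}(f):=\mathrm{Gr}_{*}(f)$ from Lemma~\ref{heap:lem:univ}. The identity $\Phi\circ\Psi=\id$ is precisely the commutativity of diagram~\eqref{graf1}. For $\Psi\circ\Phi=\id$, note that both $\mathrm{Gr}_{*}(\hH(\phi)\circ\iota_{H})$ and $\phi$ are group homomorphisms $\mathrm{Gr}(H)\to S$ whose precompositions with $\iota_{H}$ agree (the first by Lemma~\ref{heap:lem:univ}, the second trivially); Corollary~\ref{heap:cor:stepi} then forces them to coincide.

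Next I would verify naturality. Naturality in $S$: given a group homomorphism $s\colon S\to S'$, the equality $\Phi_{H,S'}(s\circ\phi)=\hH(s)\circ\Phi_{H,S}(\phi)$ reduces to $\hH(s\circ\phi)=\hH(s)\circ\hH(\phi)$, which is just functoriality of $\hH$. Naturality in $H$: given a heap homomorphism $h\colon H'\to H$, I need
\[
\Phi_{H',S}(\phi\circ \mathrm{Gr}(h))=\Phi_{H,S}(\phi)\circ h,
\]
which unfolds to $\hH(\phi)\circ \hH(\mathrm{Gr}(h))\circ\iota_{H'}=\hH(\phi)\circ\iota_{H}\circ h$. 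This follows from $\hH(\mathrm{Gr}(h))\circ \iota_{H'}=\iota_{H}\circ h$, which is exactly the defining property of $\mathrm{Gr}(h)=\mathrm{Gr}_{*}(\iota_{H}\circ h)$ via Lemma~\ref{heap:lem:univ}, and which moreover says that $\iota$ is a natural transformation $\id_{\hrd}\Rightarrow \hH\circ\mathrm{Gr}$, i.e.\ the unit of the adjunction.

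I do not expect any substantial obstacle: all the content resides in Lemma~\ref{heap:lem:univ} (existence of $\Psi$ and one triangle identity) and Corollary~\ref{heap:cor:stepi} (the cancellation needed for the other triangle identity); the rest is bookkeeping. The only point requiring minor care is the direction of naturality in the first variable, where one must remember that $\mathrm{Gr}$ acts on a heap morphism $h$ by applying the universal property to $\iota_{H}\circ h$, so that the required square commutes by construction rather than by an additional computation.
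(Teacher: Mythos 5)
Your proposal is correct and follows essentially the same route as the paper: both recognize that Lemma~\ref{heap:lem:univ} provides the universal arrow (Mac Lane IV.1, Theorem~2) and then explicitly exhibit the bijection $\phi\mapsto\hH(\phi)\circ\iota_H$ with inverse $f\mapsto\mathrm{Gr}_{*}(f)$ and check naturality in both variables. Your use of Corollary~\ref{heap:cor:stepi} for the identity $\Psi\circ\Phi=\id$ is just a slightly more explicit phrasing of the paper's appeal to the uniqueness clause of Lemma~\ref{heap:lem:univ}.
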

\begin{proof}
 For all heaps $H$ and groups $G$ let us consider functions between sets of morphisms:
 $$
 \varphi_{H,G}:\grp(\mathrm{Gr}(H),G)\lra \hrd(H,\hH(G)),\ \  
 f\lto \hH(f)\circ \iota_{H},
 $$ 
 $$\varphi_{H,G}^{-1}:\hrd(H,\hH(G))\lra \grp(\mathrm{Gr}(H),G),\ \  f\mapsto \mathrm{Gr}_{*}(f).$$ 
 
 To show that $\varphi_{H,G}$ is a bijection let $f\in \hrd(H,\hH(G))$ and $g\in \grp(\mathrm{Gr}(H),G)$, then
 $$
 \varphi_{H,G}\circ \varphi_{H,G}^{-1}(f)=\varphi_{H,G}(\mathrm{Gr}_{*}(f))=\hH(\mathrm{Gr}_{*}(f))\circ\iota_H=f,
 $$
 where the last equality follows by Lemma~\ref{heap:lem:univ}, and
 $$
 \varphi_{H,G}^{-1}\circ \varphi_{H,G}(g)=\mathrm{Gr}_{*}(\hH(g)\circ\iota_H)=g,
 $$
 where the last equality follows by the uniqueness of the morphism $\mathrm{Gr}_{*}(\hH(f)\circ\iota_H)$. 
Hence, $\varphi_{H,G}^{-1}$ is an inverse to $\varphi_{H,G}$. Thus, $\varphi_{H,G}$ is a bijection.

To check naturality conditions, let $G,S$ be groups , $H,L$ be heaps and consider homomorphisms $f:\mathrm{Gr}(H)\lra G$, $\alpha:L\lra H$ and $g:G\lra S$. Then
$$
\varphi_{L,G}(f\circ \mathrm{Gr}(\alpha))=\hH(f\circ \mathrm{Gr}(\alpha))\circ\iota_{L}=\hH(f)\circ \hH(\mathrm{Gr}(\alpha))\circ\iota_{L}=\hH(f)\circ\iota_{H}\circ \alpha=\varphi_{H,G}(f)\circ \alpha,
$$
by applying Lemma~\ref{heap:lem:univ} multiple times.
Similarly,
$$
\varphi_{H,S}(g\circ f)=\hH(g\circ f)\circ \iota_{H}=\hH(g)\circ \varphi_{H,G}(f).
$$
Therefore $\varphi$ is a natural isomorphism and the functor $\mathrm{Gr}$ is a left adjoint to the functor{ }~$\hH$.
 \end{proof}
\begin{proposition}\label{cor:monad}
The adjunction $\mathrm{Gr} \dashv \hH$ is monadic.
\end{proposition}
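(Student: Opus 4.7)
The plan is to invoke the Eilenberg--Moore comparison: monadicity is equivalent to the canonical comparison functor $\grp \to \hrd^T$, where $T := \hH \circ \mathrm{Gr}$, being an equivalence of categories. This approach sidesteps a direct application of Beck's theorem, which would otherwise require a careful analysis of coequalizers of $\hH$-split pairs in $\grp$.

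First, I would compute the monad $T$ explicitly. Using the fact recalled in the preliminaries that $\hH(\gG(K,e)) = K$ as a heap for any heap $K$ and any $e \in K$, one obtains $T(H) = H \boxplus \{*\}$ with unit $\eta_H = \iota_H$ and multiplication $\mu_H : (H \boxplus \{*\}) \boxplus \{*\} \to H \boxplus \{*\}$ determined, via the universal property of $\boxplus$, as the identity on the inner copy of $H \boxplus \{*\}$ and as $\iota_*$ on the outer copy of $\{*\}$, so that $\mu_H$ merges the two distinguished points.

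Next, I would classify $T$-algebras. By the universal property of $\boxplus$, any heap morphism $\alpha : H \boxplus \{*\} \to H$ is determined by the pair $(\alpha \circ \iota_H,\, \alpha(\iota_*(*)))$; the unit axiom forces $\alpha \circ \iota_H = \id_H$, so $\alpha$ is uniquely specified by the element $e := \alpha(\iota_*(*)) \in H$. A short calculation then shows that the associativity axiom $\alpha \circ \mu_H = \alpha \circ T(\alpha)$ is automatic: precomposing both sides with the two canonical injections $\iota_L, \iota_R$ into $(H \boxplus \{*\}) \boxplus \{*\}$ yields $\alpha$ and the constant map at $e$, respectively, using the unit axiom together with the identities $\mu_H \circ \iota_R = T(\alpha) \circ \iota_R = \iota_*$. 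Consequently, morphisms of $T$-algebras reduce exactly to heap morphisms $\varphi : H \to H'$ satisfying $\varphi(e) = e'$.

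Finally, I would invoke the correspondence, recalled in the preliminaries, between heaps equipped with a distinguished element and groups: the retract $\gG(H,e)$ is inverse to $G \mapsto (\hH(G), 1_G)$, and a heap morphism preserves distinguished elements if and only if the induced map between retracts is a homomorphism of groups. This identifies $\hrd^T$ with $\grp$ via a functor naturally isomorphic to the comparison functor, showing that $\mathrm{Gr} \dashv \hH$ is monadic. The only technically delicate point is the automatic verification of the associativity axiom for $T$-algebras, which amounts to a routine but attentive double application of the coproduct universal property.
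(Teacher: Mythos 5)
Your argument is correct, but it takes a genuinely different route from the paper's. The paper verifies the hypotheses of Beck's theorem in the form of \cite[3.14.~Theorem]{TTT}: it checks that $\hH$ reflects isomorphisms and that $\grp$ has, and $\hH$ preserves, coequalizers of $\hH$-split parallel pairs, the coequalizer being obtained as a retract $\gG(H;h(e'))$ of the split coequalizer computed in $\hrd$. You instead compute the Eilenberg--Moore category of the monad $T=\hH\circ\mathrm{Gr}$ directly: $T(H)=H\boxplus\{*\}$, the unit axiom pins an algebra structure down to the choice of a basepoint $e=\alpha(\iota_*(*))$, associativity is automatic, and $T$-algebra morphisms are exactly basepoint-preserving heap morphisms, so the classical correspondence between pointed heaps and groups identifies $\hrd^T$ with $\grp$ compatibly with the comparison functor. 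Your route is longer but more informative: it exhibits the Eilenberg--Moore category concretely as the category of pointed heaps and thereby recovers, as the content of monadicity, the Baer--Pr\"ufer statement that a group is the same thing as a heap with a chosen element; the paper's route is shorter to write given Beck's theorem but leaves the algebras implicit. The one step in your sketch that deserves to be written out in full is the identity $T(\alpha)\circ\iota_R=\iota_*$: it holds because $\mathrm{Gr}(\alpha)$ is a homomorphism of groups and therefore sends the outer $*$, which is the neutral element of $\mathrm{Gr}(T(H))$, to the neutral element $\iota_*(*)$ of $\mathrm{Gr}(H)$. Once that is in place, the rest of the associativity check is the routine double application of the coproduct universal property that you describe.
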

\begin{proof}
By \cite[3.14. Theorem]{TTT} it is enough to show that $\hH$ reflects isomorphisms, $\grp$ has coequalizers of $\hH$-split parallel pairs, and $\hH$ preserves those coequalizers. 

Let us start with the property of reflecting isomorphisms. Let $g:G\to G'$ be a homomorphism of groups such that $\hH(g)$ is an isomorphism of heaps. Then, since $\hH(g)=g$ as functions there exists a function which is an inverse to $g$, i.e. $\hH(g)^{-1}$. It is obvious that inverse is a group homomorphism. Therefore, $\hH$ reflects isomorphisms.

To prove that coequalizers for $\hH$-split parallel pairs exist let $f,g:G\to G'$ be an $\hH$-split parallel pair, then by definition there exist heap $H$ and heap homomorphisms $$h:\hH(G')\to H,\ t:H\to\hH(G'),\ s:\hH(G')\to \hH(G)$$ such that $h\circ \hH(f)=h\circ\hH(g)$, $s$ and $t$ are sections of $\hH(f)$ and $h$, respectively, and $\hH(g)\circ s=t \circ h$. 
One can check that existence of those homomorphisms imply that a pair $(H,h)$ is a coequalizer of $\hH(f)$ and $\hH(g)$. Now, let us denote by $e\in G$ and $e'\in G'$ neutral elements of the groups, then we can consider an $h(e')$-retract of $H$, a group $\gG(H;h(e'))$.
Observe that all the aforementioned homomorphisms preserve neutral elements of groups. Thus, all of those homomorphisms are homomorphisms of appropriate retracts.
Therefore $(\gG(H,e),h)$ is a coequalizer of $\hH$-split parallel pair $(f,g)$ and $h$ is surjective. Thus, $\hH(\gG(H;h(e')))=H$ and $\hH$ preserves the coequalizers. Hence, the adjunction is monadic.
\end{proof}

To underline the meaning of the preceding theorem in the context of groups let us consider the following diagram
\begin{large}
\[
\xymatrix @C=30pt{
\grp \ar@<+0.5ex>[dr]^{\hH} \ar@<+0.5ex>[dd]^{\uU_{\grp}} & \\
 & \hrd\ , \ar@<+0.5ex>[dl]^{\uU_{\hrd}} \ar@<+0.5ex>[ul]^-{\mathrm{Gr}} \\
\Set \ar@<+0.5ex>[uu]^{\Gg} \ar@<+0.5ex>[ur]^{\Hh} & 
}
\]
\end{large}
where $U_{\grp}$ is a forgetful functor and $\Gg$ is its left adjoint, the free functor. 

The first observation is that all the opposite arrows are adjoints to each other.

The second observation is that a composition of functors,
$$
\xymatrix{\grp\ar[rr]^{\hH} &&\hrd\ar[rr]^{U_{\hrd}}&&\Set}
$$
is a forgetful functor $\uU_{\grp}$ since for any group $G$, $\hH(G)$ and $G$ are equal sets, and every homomorphism of groups $f$ is the same function as $\hH(f)$. These two observations leads to the following corollaries.
\begin{corollary}\label{cor3}
A functor $\mathrm{Gr}\circ \Hh:\Set\to \grp$ is a free functor, i.e. it is a left adjoint to the functor $U_{\grp}:\grp\to \Set$.
\end{corollary}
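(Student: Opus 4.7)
The plan is to obtain this as a direct consequence of Theorem~\ref{thm1} together with the standard fact that composition of left adjoints is a left adjoint, applied to the pair of adjunctions $\Hh \dashv \uU_{\hrd}$ (from the preliminaries) and $\mathrm{Gr} \dashv \hH$ (Theorem~\ref{thm1}). No further hard work should be needed: the key structural observation, already recorded in the discussion preceding the corollary, is the identification $\uU_{\grp} = \uU_{\hrd}\circ \hH$ on the nose, both on objects (since $\hH(G)$ and $G$ agree as underlying sets) and on morphisms (since $\hH(f)=f$ as functions).

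Concretely, I would chain the two adjunction bijections. For a set $X$ and a group $G$, construct the natural isomorphism
\[
\grp(\mathrm{Gr}(\Hh(X)),G)\;\xrightarrow{\;\varphi_{\Hh(X),G}\;}\;\hrd(\Hh(X),\hH(G))\;\xrightarrow{\;\psi_{X,\hH(G)}\;}\;\Set(X,\uU_{\hrd}(\hH(G))),
\]
where $\varphi_{\Hh(X),G}$ is the hom-set bijection from the proof of Theorem~\ref{thm1} and $\psi_{X,\hH(G)}$ is the hom-set bijection of the free--forgetful adjunction $\Hh\dashv \uU_{\hrd}$. The composite is natural in $X$ and in $G$ because each factor is, and by the identification $\uU_{\grp}=\uU_{\hrd}\circ \hH$ the codomain is $\Set(X,\uU_{\grp}(G))$. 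This is exactly the adjunction $\mathrm{Gr}\circ \Hh \dashv \uU_{\grp}$, which says that $\mathrm{Gr}\circ \Hh$ is the free group functor.

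There is no serious obstacle; the only thing one must be careful about is the literal equality (not mere natural isomorphism) $\uU_{\grp}=\uU_{\hrd}\circ \hH$. If one preferred to avoid invoking the general ``composition of adjoints'' result, one could give an equivalent direct argument via universal arrows: the unit at $X$ is the composite $X \xrightarrow{\eta^{\Hh}_X} \uU_{\hrd}(\Hh(X)) \xrightarrow{\uU_{\hrd}(\iota_{\Hh(X)})} \uU_{\hrd}(\hH(\mathrm{Gr}(\Hh(X))))=\uU_{\grp}(\mathrm{Gr}(\Hh(X)))$, and its universal property is inherited from the universal properties in Lemma~\ref{heap:lem:univ} and of $\Hh$ respectively. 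Either formulation gives a short proof, which is why the author presents the statement as a corollary.
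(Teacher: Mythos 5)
Your proposal is correct and follows exactly the paper's route: the paper likewise notes that $\uU_{\grp}=\uU_{\hrd}\circ\hH$ in the discussion preceding the corollary and then disposes of the statement by the standard fact that a composite of left adjoints is left adjoint to the composite of the right adjoints. Your explicit chaining of the two hom-set bijections simply spells out the detail the paper leaves to the reader.
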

\begin{proof}
It is easy to show that a composition of two left adjoints is a left adjoint to the composition.
\end{proof}
\begin{corollary}\label{cor4}
For any set $X$, $(\mathrm{Gr}\circ\Hh)(X)\cong\Gg(X)$.
\end{corollary}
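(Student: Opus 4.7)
The plan is to invoke the uniqueness of left adjoints up to natural isomorphism. By Corollary~\ref{cor3}, the composite $\mathrm{Gr}\circ\Hh$ is a left adjoint to the forgetful functor $\uU_{\grp}:\grp\to\Set$. By hypothesis (as stated in the diagram preceding Corollary~\ref{cor3}), the free group functor $\Gg:\Set\to\grp$ is also a left adjoint to $\uU_{\grp}$. Since any two left adjoints to the same functor are naturally isomorphic, we conclude $\mathrm{Gr}\circ\Hh\cong\Gg$ as functors, and in particular $(\mathrm{Gr}\circ\Hh)(X)\cong\Gg(X)$ for every set $X$.

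More concretely, I would construct the isomorphism by combining the two adjunction bijections. For any set $X$ and group $G$ one has natural bijections
\begin{equation*}
\grp((\mathrm{Gr}\circ\Hh)(X),G)\;\cong\;\hrd(\Hh(X),\hH(G))\;\cong\;\Set(X,\uU_{\grp}(G))\;\cong\;\grp(\Gg(X),G),
\end{equation*}
where the first bijection is $\varphi_{\Hh(X),G}$ from Theorem~\ref{thm1}, the second is the free-heap adjunction applied to the underlying set $\uU_{\grp}(G)=\uU_{\hrd}(\hH(G))$ (using that $\uU_{\grp}=\uU_{\hrd}\circ\hH$), and the third is the free-group adjunction. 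Since this chain of bijections is natural in $G$, the Yoneda lemma furnishes a unique isomorphism of groups $(\mathrm{Gr}\circ\Hh)(X)\cong\Gg(X)$ that represents the composite natural transformation.

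No obstacle is expected; the argument is essentially formal and relies only on previously established adjunctions, together with the observation $\uU_{\grp}=\uU_{\hrd}\circ\hH$ recorded right before Corollary~\ref{cor3}. The only minor care needed is to note that these bijections are natural in both variables, so that the resulting isomorphism $(\mathrm{Gr}\circ\Hh)(X)\cong\Gg(X)$ is itself natural in $X$, giving a natural isomorphism of functors, not merely a pointwise one.
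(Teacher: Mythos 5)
Your proposal is correct and is essentially the paper's own argument: the paper likewise deduces the isomorphism from the uniqueness of left adjoints to $\uU_{\grp}$ (citing Mac Lane), given Corollary~\ref{cor3}. Your second paragraph merely unwinds that standard uniqueness proof via the composite adjunction bijections and Yoneda, which is a harmless elaboration rather than a different route.
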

\begin{proof}
Since both functors $\mathrm{Gr}\circ\Hh$ and $\Gg$ are left adjoints to the forgetful functor, they are naturally isomorphic, see \cite[Corollary 1, page 85]{Mac:lane}.
\end{proof}
\section{Conclusions}\label{sect4}

To summarise, in the main part we have shown that a free functor from the category of sets to the category of groups is decomposable into two functors, through the category of heaps. The description of the free functor provides a method to construct a free group.

Sadly, in general, it is not an easy task to describe a coproduct of heaps. One intuitively knows it is a quotient of a free heap over the disjoint union. The choice of generators for the normal sub-heap is at least tricky, because one must at the same time deal with the allocations of a ternary operation in elements of the free heap, see the associativity rule  \eqref{rule:2}.

Fortunately, since we are interested in a composition of functors $\mathrm{Gr}\circ\Hh$, we only need to consider the coproduct of two heaps, the singleton heap and a free heap $\Hh(X)$, for any set $X$. Observe that a heap described on the singleton set is unique up to isomorphism. Thus, we can identify a singleton heap with a free heap $\Hh(\{*\})$. Now, by definition of $\mathrm{Gr}\circ\Hh$,
$(\mathrm{Gr}\circ\Hh)(X)=\gG(\Hh(X)\boxplus\Hh(\{*\});*)$, but $\Hh$ is a left adjoint functor to the forgetful functor, so it preserves coproducts. Therefore, we have that $\Hh(X)\boxplus\Hh(\{*\})\cong\Hh(X \sqcup \{*\})$. The construction of a free heap is well-known, see for example \cite{BrzRyb:mod}. Hence, we start with taking a set $X$, then consider a disjoint union with $\{*\}$, construct a free heap over $X \sqcup \{*\}$ and take a retract of that heap in $*$. The obtained retract is a free group. Even though one can argue that we still add a disjoint element $*$, in this setup, it has a proper algebraic interpretation in the category of heaps, i.e. taking a coproduct of a free heap with a singleton heap. Another difference is that one does not have to extend the set of generators by inverses, as inverses in the retract are words of the form $[*,w,*]$, for any $w\in \Hh(X \sqcup \{*\})$. 

Hopefully, the reader will find this observation as compelling as I do.

{\bf Acknowledgments.} The author is grateful to Tomasz Brzezi\'nski and Paolo Saracco for all the comments and advice.

\end{document}